\documentclass[12pt, reqno]{amsart}
\usepackage{amsmath}
\usepackage{amssymb}
\usepackage{hyperref}
\usepackage{amsfonts}
\usepackage{color}
\usepackage{enumerate}
\usepackage{mathtools}
\usepackage{url}
\usepackage{avant}
\usepackage[T1]{fontenc}
\usepackage[utf8]{inputenc}

\usepackage{lmodern}
\allowdisplaybreaks
\textwidth6.2in \textheight8.5in \oddsidemargin0.00in
\evensidemargin0.00in
\newtheorem{theorem}{Theorem}[section]
\newtheorem{corollary}[theorem]{Corollary}
\newtheorem{lemma}[theorem]{Lemma}

\theoremstyle{definition}

\theoremstyle{remark}
\newtheorem{remark}[theorem]{\bf{Remark}}
\numberwithin{equation}{section}
\theoremstyle{remark}

\hypersetup{
    colorlinks=true,
    linkcolor=blue,
    citecolor=red
    }

\newcommand{\beas}{\begin{eqnarray*}}
\newcommand{\eeas}{\end{eqnarray*}}
\newcommand{\bes} {\begin{equation*}}
\newcommand{\ees} {\end{equation*}}
\newcommand{\be} {\begin{equation}}
\newcommand{\ee} {\end{equation}}
\newcommand{\bea} {\begin{eqnarray}}
\newcommand{\eea} {\end{eqnarray}}
\newcommand{\ra} {\rightarrow}

\newcommand{\R}{\mathbb R}

\newcommand{\C}{\mathbb C}

\newcommand{\N}{\mathbb N}
\newcommand{\Z}{\mathbb Z}
\newcommand{\la}{\lambda}

\renewcommand{\Im}{\operatorname{Im}}


\title[ Unique Continuation Inequalities] { Unique Continuation Inequalities for the Schr\"odinger equations associated with the Special Hermite operators}
\author[J. Sarkar]{Jayanta Sarkar}
\address{Jayanta Sarkar \endgraf Department of Mathematics and Statistics, \endgraf Indian Institute of Science Education and Research Kolkata, \endgraf Mohanpur-741246, Nadia, West Bengal, India.} 
\email{jayantasarkarmath@gmail.com, jayantasarkar@iiserkol.ac.in}
\keywords{Unique continuations, Uncertainty Principles, Schr\"odinger equations, Special Hermite operators, Hermite operators}
\subjclass[2020]{Primary: 35B60, Secondary: 93B07, 35P10, 35J10}
\begin{document}
\maketitle
\begin{abstract}
We investigate unique continuation inequalities for solutions of the Schr\"odinger equations associated with special Hermite operators. Our main result establishes that if the solution remains small at two distinct time points outside sets of finite measure, then the solution also remains small throughout the entire space. We also explore analogous results for the Hermite-Schr\"odinger equations.  
\end{abstract}

\section{Introduction and Main result}
Let us consider the following initial value problem for the free Schr\"odinger equation on the Euclidean spaces
\begin{equation}\label{schrorn}
\begin{cases}
i\partial_t u(x, t) &= -\Delta_{\R^d} u(x, t), \quad (x, t)\in \R^d \times (0, \infty), \\
u(x, 0)&= u_0(x), \:\: u_0\in L^2(\R^d),
\end{cases}
\end{equation}
where $\Delta_{\R^d}$ is the Laplacian on $\R^d$, $d\geq 1$. Several results regarding the unique continuation of the equation \eqref{schrorn} can be proved using the following well-known identity 
\be \label{identity}
u(x, t)= (2it)^{-n/2}~e^{i|x|^2/(4t)}~ \mathcal F \left({e^{i|\cdot|^2/(4t)}u_0}\right)\left(\frac{x}{2t}\right),\quad (x,t)\in\R^d\times(0,\infty),
\ee
and the uncertainty principles for the Fourier transform on $\R^d$. Here, $\mathcal Fg$ denotes the Fourier transform of a function $g\in L^1(\R^d)\cap L^2(\R^d)$, which extends to all of $L^2(\R^d)$ as an isometry in the usual way. One such example of this, in recent times, is the following observability inequality at two points in time due to Wang, Wang, and Zhang \cite{WWZ}: there exists a constant $c=c_d>0$ such that for all $t>0$, all $r_1,r_2>0$, and all $u$ solving \eqref{schrorn},
\be \label{wwzo}
\int_{\R^d} |u_0(x)|^2\,dx\leq ce^{c r_1 r_2/t} \left(\int_{|x|>r_1} |u_0(x)|^2\,dx+ \int_{|x|>r_2} |u(x, t)|^2\,dx \right).
\ee
The proof of \eqref{wwzo} uses the following uncertainty principle due to Nazarov and Jaming \cite{Naz, Jamn}:
there exists a constant $C=C_d>0$ such that, for each pair of measurable sets $A$, $B\subset\R^d$ of finite Lebesgue
measure and for every $f\in L^2(\R^d)$,
\begin{equation}\label{naza}
    \int_{\R^d}|f(x)|^2\,dx\leq Ce^{C|A||B|}\left(\int_{\R^d\setminus A}|f(x)|^2\,dx+\int_{\R^d\setminus B}|\mathcal F f(\xi)|^2\:d\xi\right),
\end{equation}
where $|S|$ denotes the Lebesgue measure of a measurable set $S\subset\R^d$. Here and in what follows, whenever we talk about measurable subsets of some measurable space $X$ we always assume that they are of positive measure. 

In fact, it is shown in \cite{WWZ} that \eqref{naza} is essentially equivalent to the following statement: for each pair of measurable sets $A,\,B$ in $\R^d$ of finite measure and for $t>0$, there exists a constant $C=C(t,A,B)>0$ such that 
\be \label{wwzog}
\int_{\R^d} |u_0(x)|^2\,dx\leq C \left(\int_{\R^d\setminus A} |u_0(x)|^2\,dx+ \int_{\R^d\setminus B} |u(x, t)|^2\,dx \right),
\ee
for all solutions $u$ of \eqref{schrorn}. Several extensions of the unique continuation inequality, similar to \eqref{wwzo}, have been developed for various types of Schr\"odinger equations \cite{HS, WLHnon, WWKdV} on Euclidean spaces. Very recently, an analogue of \eqref{wwzog} has been proved in \cite{BR} for the free Schr\"odinger equation on Riemannian symmetric spaces of noncompact type.

Our aim in this article is to enquire whether an analogue of \eqref{wwzog} holds for the initial value problem \eqref{schrorn} by replacing $-\Delta_{\R^d}$ with some other elliptic operators. We first consider the scaled Hermite operator $$H(\la):=-\Delta_{\R^d}+\la^2\|x\|^2,$$
where $\la\in\R^*:=\R\setminus\{0\}$ and $\|\cdot\|$ is the  Euclidean norm. For a fixed $\la\in\R^*$, the corresponding Schr\"odinger equation (also known as Hermite-Schr\"odinger equation) is given by
\be\label{hermitesc}
\begin{cases}
i\partial_tu(x,t)&=H(\la)u(x,t),\quad(x,t)\in\R^d\times(0,\infty),\\
u(x,0)&=u_0(x),\quad u_0\in L^2(\R^d). 
\end{cases}
\ee
It can be shown that the solution $u(x,t)=e^{itH(\la)}u_0(x)$ is given by the fractional Fourier transform of $f$, whenever $2\la t\in\R\setminus\pi\Z$. Following the strategy used in \cite{Jamfr}, we will prove the following result.
\begin{theorem}\label{herobserv}
 Suppose $A$ and $B$ are measurable subsets of $\R^d$ with finite Lebesgue measure. Suppose $t\in (0,\infty)$ is such that $2\la t\in\R\setminus\pi\Z$. Then, for all solutions $u$ of the equation \eqref{hermitesc}, the following inequality holds.
\be \label{herobineq}
\int_{\R^d}|u_0(x)|^2\,dx\leq Ce^{C|\sin(2|\la|t)|^{-d}|\la|^d|A||B|}  \left(\int_{\R^d\setminus A} |u_0(x)|^2\,dx+ \int_{\R^d\setminus B}|u(x, t)|^2\,dx\right),
\ee 
where $C$ is the constant appearing in \eqref{naza}.\end{theorem}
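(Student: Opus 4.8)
The strategy, following \cite{Jamfr}, is to factor the propagator $u_0\mapsto u(\cdot,t)$ of \eqref{hermitesc} — which, as recalled above, is a normalized fractional Fourier transform — as a composition of three elementary maps: multiplication by a unimodular chirp $e^{ia\|x\|^2}$, an $L^2$-dilation $g\mapsto g(\beta\,\cdot)$, and the ordinary Fourier transform $\mathcal F$. Once this ``chirp--dilation--Fourier'' factorization is available, the estimate \eqref{herobineq} drops out of the Nazarov--Jaming uncertainty principle \eqref{naza} by a change of variables. Since $H(\la)=H(|\la|)$ depends only on $\la^2$, we may assume $\la>0$. The hypothesis $2\la t\in\R\setminus\pi\Z$ then guarantees $\sin(2\la t)\neq 0$, which is precisely what is needed for the kernel of the propagator to be nondegenerate and for the dilation parameter below to be finite and nonzero.

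\emph{Step 1 (explicit form of the propagator).} The unitary rescaling $f\mapsto\la^{-d/4}f(\cdot/\sqrt\la)$ conjugates $H(\la)$ to $2\la\bigl(-\tfrac12\Delta+\tfrac12\|x\|^2\bigr)$, so up to this conjugation the propagator of \eqref{hermitesc} is the standard harmonic-oscillator propagator at time $2\la t$. Using the Mehler--van Vleck formula for the latter and undoing the rescaling, one gets, first for Schwartz $u_0$ and then on all of $L^2(\R^d)$ by density, a representation
\be
u(x,t)=c\, e^{ia\|x\|^2}\int_{\R^d}e^{-ib\langle x,y\rangle}\,u_0(y)\,dy,
\ee
with $a=\tfrac{\la\cos(2\la t)}{2\sin(2\la t)}$ and $b=\tfrac{\la}{\sin(2\la t)}$ (in the Fourier normalization used throughout). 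Setting $v_0(y):=e^{ia\|y\|^2}u_0(y)\in L^2(\R^d)$ and $\beta:=b$, this becomes
\be
u(x,t)=c'\,e^{ia\|x\|^2}\,\mathcal F v_0(\beta x),\qquad |c'|^2=|\beta|^d=|\sin(2|\la|t)|^{-d}|\la|^d,
\ee
where the identity $|c'|^2=|\beta|^d$ is forced by Plancherel's theorem together with the unitarity of the propagator.

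\emph{Step 2 (applying \eqref{naza}).} Since $|v_0|=|u_0|$ pointwise, $\int_{\R^d}|u_0|^2=\int_{\R^d}|v_0|^2$ and $\int_{\R^d\setminus A}|u_0|^2=\int_{\R^d\setminus A}|v_0|^2$. From Step 1, $|u(x,t)|^2=|\beta|^d\,|\mathcal F v_0(\beta x)|^2$, so the substitution $\xi=\beta x$ gives $\int_{\R^d\setminus B}|u(x,t)|^2\,dx=\int_{\R^d\setminus B'}|\mathcal F v_0(\xi)|^2\,d\xi$, where $B':=\{\beta x:x\in B\}$ has finite positive measure $|B'|=|\beta|^d|B|$. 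Applying \eqref{naza} to $v_0$ with the sets $A$ and $B'$ then gives
\be
\int_{\R^d}|u_0|^2=\int_{\R^d}|v_0|^2\le Ce^{C|A||B'|}\left(\int_{\R^d\setminus A}|v_0|^2+\int_{\R^d\setminus B'}|\mathcal F v_0|^2\right)=Ce^{C|\beta|^d|A||B|}\left(\int_{\R^d\setminus A}|u_0|^2+\int_{\R^d\setminus B}|u(\cdot,t)|^2\right),
\ee
and inserting $|\beta|^d=|\sin(2|\la|t)|^{-d}|\la|^d$ and retaining the same constant $C$ as in \eqref{naza} yields exactly \eqref{herobineq}.

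The one step that needs genuine care is Step 1: one has to compute the Mehler--van Vleck kernel of $H(\la)$ with the right constants, keep track of how the $\sqrt\la$-rescaling and the Fourier normalization propagate through, and check that the amplitude satisfies $|c'|=|\beta|^{d/2}$ so that no spurious dimensional factor survives into the exponent — this last point is automatic from the unitarity of the propagator, but it is what makes the clean exponent in \eqref{herobineq} possible. Everything after Step 1 is a routine change of variables: the chirps contribute nothing because they are unimodular, and the dilation by $\beta$ merely rescales $B$ to $B'$; so I do not anticipate any further obstacle.
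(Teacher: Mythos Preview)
Your argument is correct and is essentially the same as the paper's: both factor the Hermite propagator as chirp--dilation--Fourier and then feed the result into Nazarov--Jaming \eqref{naza}; the paper merely packages the factorization as a separate lemma on the fractional Fourier transform (reached via the Hermite expansion \eqref{frher}) before applying it with $\alpha=0$, $\beta=-2|\la|t$. One small slip: your first display in Step~1 is missing the $e^{ia\|y\|^2}$ chirp from the Mehler kernel, which you then tacitly restore when you set $v_0(y)=e^{ia\|y\|^2}u_0(y)$.
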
 
A version of the observability inequality for equation \eqref{hermitesc} with $\lambda=1$ has been proved in the recent work \cite[Theorem 1.3]{HWW1}. The difference between the inequality proved there with \eqref{herobineq} is that in \cite{HWW1}, the right-hand side is a double integration taken over an interval in the time variable together with the complement of a ball in the space variable.

\vspace{0.3cm}
We now focus on a family of elliptic operators on $\C^d$ namely the special Hermite operators $\{L_{\la}:\la\in\R^*\}$. We fix $\la\in\R^*$ and consider the following Schr\"odinger equation
\be\label{sphermitesc}
\begin{cases}
i\partial_tu(z,t)&=L_{\la}u(z,t),\quad(z,t)\in\C^d\times(0,\infty),\\
u(z,0)&=u_0(z),\quad u_0\in L^2(\C^d). 
\end{cases} 
\ee
The main result of this paper is the following analogue of \eqref{wwzog} for solutions of \eqref{sphermitesc}.
\begin{theorem}\label{spherobserv}
 Suppose $E$ and $\Omega$ are measurable subsets of $\C^d$ with finite Lebesgue measure. Suppose that $t\in (0,\infty)$ is such that $\la t\in\R\setminus\pi\Z$. Then there exists a positive constant $C(\la, t,  E, \Omega)$ such that the following inequality holds for all solutions $u$ of the equation \eqref{sphermitesc}
\be \label{spherobineq}
\int_{\C^d}|u_0(z)|^2\,dz\leq C(\la,t, E,  \Omega) \left(\int_{\C^d\setminus E} |u_0(z)|^2\,dz+ \int_{\C^d\setminus\Omega}|u(z, t)|^2\,dz\right).
\ee  
\end{theorem}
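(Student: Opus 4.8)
The plan is to deduce Theorem~\ref{spherobserv} from the already established Hermite case, Theorem~\ref{herobserv}, by exploiting the splitting of the special Hermite operator, on $\R^{2d}\cong\C^d$, into an isotropic Hermite operator and a rotation generator. Writing $z_j=x_j+iy_j$, one has the operator identity
\be\label{split}
L_\la=\left(-\Delta_{\C^d}+\frac{\la^2}{4}|z|^2\right)+\la\,M=H(\la/2)+\la\,M,\qquad
M:=i\sum_{j=1}^{d}\left(x_j\partial_{y_j}-y_j\partial_{x_j}\right),
\ee
in which $H(\la/2)$ is precisely the scaled Hermite operator appearing in \eqref{hermitesc}, now in dimension $2d$ and with parameter $\la/2$, and $M$ is the self-adjoint generator of the diagonal rotation action $z\mapsto e^{i\theta}z$ on $L^2(\C^d)$, with spectrum contained in $\Z$. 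Since this rotation action preserves the isotropic oscillator $H(\la/2)$, the two self-adjoint summands on the right of \eqref{split} strongly commute, so the solution operator $U_t$ of \eqref{sphermitesc} factorizes as
\be\label{fact}
U_t=V_t\circ R_t,
\ee
where $V_t$ is the solution operator of the Hermite--Schr\"odinger equation \eqref{hermitesc} in dimension $2d$ with parameter $\la/2$, and $R_t$ is the unitary, measure-preserving operator $f\mapsto f\circ\rho_t$ for a suitable rotation $\rho_t\in SO(2d)$ depending on $\la$ and $t$. If one prefers to avoid the functional calculus for commuting operators, one may instead check directly that $(z,t)\mapsto\big(V_t(u_0\circ\rho_t)\big)(z)$ solves \eqref{sphermitesc} with datum $u_0$, and conclude by uniqueness of $L^2$-solutions.

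Granting \eqref{fact}, I would set $v_0:=R_tu_0$, so that $|v_0|=|u_0|\circ\rho_t$ pointwise and $u(\cdot,t)=V_tv_0$. Then $v_0$ and $u(\cdot,t)$ are exactly the datum and the time-$t$ solution of \eqref{hermitesc} on $\R^{2d}$ with parameter $\la/2$, whose admissibility condition $2(\la/2)t=\la t\notin\pi\Z$ is the hypothesis of Theorem~\ref{spherobserv}. Applying Theorem~\ref{herobserv} with the finite-measure sets $A:=\rho_t^{-1}(E)$ (so $|A|=|E|$, since $\rho_t$ is an isometry) and $B:=\Omega$, and using $|\sin(2\cdot\tfrac{|\la|}{2}t)|=|\sin(\la t)|$, gives
\be\label{applied}
\int_{\C^d}|v_0(z)|^2\,dz\le C\,e^{\,C\,2^{-2d}|\la|^{2d}|\sin(\la t)|^{-2d}|E|\,|\Omega|}\left(\int_{\C^d\setminus A}|v_0(z)|^2\,dz+\int_{\C^d\setminus\Omega}|u(z,t)|^2\,dz\right),
\ee
with $C$ the constant of \eqref{naza} in dimension $2d$. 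Finally, unitarity of $R_t$ gives $\int_{\C^d}|v_0|^2=\int_{\C^d}|u_0|^2$, while the change of variables $w=\rho_t z$ gives $\int_{\C^d\setminus A}|v_0|^2=\int_{\C^d\setminus\rho_t(A)}|u_0|^2=\int_{\C^d\setminus E}|u_0|^2$; substituting into \eqref{applied} yields \eqref{spherobineq}, in fact with the explicit admissible constant $C(\la,t,E,\Omega)=C\exp\!\big(C\,2^{-2d}|\la|^{2d}|\sin(\la t)|^{-2d}|E|\,|\Omega|\big)$.

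Essentially all of the work sits in justifying \eqref{split}--\eqref{fact}: that $L_\la$ genuinely decomposes as $H(\la/2)$ plus $\la$ times the rotation generator, that the two summands strongly commute, and hence that the special Hermite propagator is an honest rotation of $\C^d$ followed by the $2d$-dimensional Hermite propagator. After that, the argument is a change of variables and a citation of Theorem~\ref{herobserv}. A more computational alternative, avoiding Theorem~\ref{herobserv} altogether, is to realize $U_t$ as twisted convolution against the kernel obtained by continuing the special Hermite heat kernel to imaginary time --- via the Laguerre--Mehler identity $\sum_{k\ge0}\varphi_k^{\la}(z)\,r^k=(1-r)^{-d}\exp\!\big(-\tfrac{|\la|}{4}\tfrac{1+r}{1-r}|z|^2\big)$ evaluated at $r=e^{2i|\la|t}$ --- to observe that for $\la t\notin\pi\Z$ this kernel is a unimodular Gaussian $c(\la,t)\,e^{ia|z|^2}$ with $a=-\tfrac{|\la|}{4}\cot(|\la|t)$ and $|c(\la,t)|$ comparable to $\big(|\la|/|\sin(\la t)|\big)^{d}$, to deduce that $|u(z,t)|$ equals a constant times $\big|\mathcal F g(M_0z)\big|$ for a function $g$ with $|g|=|u_0|$ and an invertible real-linear map $M_0$ on $\R^{2d}$ with $|\det M_0|=\big(|\la|/(2|\sin(\la t)|)\big)^{2d}$ (here $\mathcal F$ is a suitably normalized Fourier transform on $\R^{2d}$), and then to apply \eqref{naza} directly on $\R^{2d}$ with $B:=M_0(\Omega)$. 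The price of that route is justifying the passage to imaginary time (e.g. through the holomorphic family $e^{-(s-it)L_\la}$, $s\downarrow0$) and the $L^2$-boundedness of the resulting oscillatory twisted convolution --- which is exactly what the reduction to Theorem~\ref{herobserv} lets us bypass.
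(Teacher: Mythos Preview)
Your argument is correct and takes a genuinely different route from the paper. The paper proves Theorem~\ref{spherobserv} by an Amrein--Berthier type contradiction argument: it shows the operator $P_t^\la=1_\Omega S_t^\la 1_E$ is Hilbert--Schmidt, then assumes $\|P_t^\la\|=1$, extracts an extremizer $f_0$, and uses twisted translations together with Lemma~\ref{crucial} to manufacture infinitely many linearly independent eigenfunctions of a compact operator at eigenvalue $1$, a contradiction. Your approach instead exploits the algebraic identity $L_\la=H(\la/2)\pm\la M$ on $\R^{2d}$ (note: with your sign convention for $M$ the correct decomposition is $L_\la=H(\la/2)-\la M$, not $+\la M$; this only flips the direction of the rotation $\rho_t$ and is immaterial), the strong commutativity of the isotropic oscillator with the diagonal rotation generator, and the resulting factorization $U_t=V_t\circ R_t$ to reduce directly to Theorem~\ref{herobserv} in dimension $2d$ with parameter $\la/2$. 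The hypothesis $\la t\notin\pi\Z$ matches exactly the admissibility condition $2\cdot(\la/2)t\notin\pi\Z$ of Theorem~\ref{herobserv}, and the rotation is measure-preserving, so the change of variables goes through as you wrote.

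What each approach buys: the paper's method is more robust---it needs only that the propagator is unitary, given by an integral kernel of constant modulus, and intertwines some family of translations---so it would survive in settings lacking a clean splitting. Your method is shorter, avoids compactness and contradiction entirely, and yields the explicit constant
\[
C(\la,t,E,\Omega)=C\exp\!\Big(C\,2^{-2d}|\la|^{2d}|\sin(\la t)|^{-2d}|E|\,|\Omega|\Big),
\]
which the paper explicitly concedes (in its closing Remark) its own proof cannot provide. So your reduction is not merely an alternative but a strict improvement of the stated theorem.
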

In contrast with the proof of \eqref{wwzog}, there is no analogue of the identity \eqref{identity} for solutions of \eqref{sphermitesc}. Thus, the inequality \eqref{spherobineq} cannot be reduced to any familiar uncertainty principle for the Fourier transform. To overcome this difficulty we employ ideas based on the proof of the uncertainty principle due to Amerin and Berthier \cite{AB} (see also \cite{GJ,WWKdV}).

\vspace{0.3cm}
As an immediate corollary of Theorem \ref{spherobserv}, we obtain the following result, which can be regarded as an analogue of Benedicks's theorem \cite{Be} for the solutions of the free Schr\"odinger equation associated with special Hermite operators.
\begin{corollary}
  Let $E$, $\Omega$ be measurable subsets of $\C^d$ with finite measure and let $u$ be a solution of \eqref{sphermitesc}. If there exists $t\in (0,\infty)$ such that $\la t\in\R\setminus\pi\Z$, and the supports of $u_0$ and $u(\cdot, t)$ are both contained within $E$ and $\Omega$, respectively, then it follows that $u$ vanishes identically.  
\end{corollary}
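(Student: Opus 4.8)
The plan is to obtain the corollary as a direct specialization of Theorem \ref{spherobserv}, with the support hypotheses used to kill the right-hand side of \eqref{spherobineq}. First I would observe that, since $\operatorname{supp} u_0 \subseteq E$, the datum $u_0$ vanishes almost everywhere on $\C^d \setminus E$, whence $\int_{\C^d \setminus E} |u_0(z)|^2\,dz = 0$; similarly, since $\operatorname{supp} u(\cdot,t) \subseteq \Omega$, we get $\int_{\C^d \setminus \Omega} |u(z,t)|^2\,dz = 0$. By assumption $E$ and $\Omega$ have finite Lebesgue measure and $\la t \in \R \setminus \pi\Z$, so Theorem \ref{spherobserv} applies to this triple $(E,\Omega,t)$ and gives
\[
\int_{\C^d} |u_0(z)|^2\,dz \leq C(\la,t,E,\Omega)\,(0+0) = 0 ,
\]
so that $u_0 = 0$ almost everywhere on $\C^d$.

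It then remains only to propagate this vanishing in time. Since $u$ solves \eqref{sphermitesc}, for every $s \in [0,\infty)$ we have $u(\cdot,s) = e^{isL_\la} u_0$, and $e^{isL_\la}$ is a bounded (indeed unitary) operator on $L^2(\C^d)$, so $u(\cdot,s) = e^{isL_\la}\,0 = 0$ in $L^2(\C^d)$ for every $s$; hence $u$ vanishes identically. I do not anticipate a genuine obstacle here — the argument is essentially a substitution into Theorem \ref{spherobserv} — the only points needing a word of care are the elementary translation of the support conditions into the vanishing of the two boundary integrals in \eqref{spherobineq}, and the fact that the $L^2$-solution of \eqref{sphermitesc} is determined by its initial datum through the propagator $e^{isL_\la}$. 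Conceptually, this is the analogue of Benedicks's theorem in which the Fourier pair $(f,\mathcal F f)$ is replaced by the pair $(u_0, u(\cdot,t))$ of initial and terminal data for the special Hermite--Schr\"odinger flow.
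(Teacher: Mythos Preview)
Your proposal is correct and is precisely the intended argument: the paper presents this statement as an immediate corollary of Theorem \ref{spherobserv} without writing out a separate proof, and your derivation---substituting the support hypotheses into \eqref{spherobineq} to obtain $u_0=0$, then propagating via the unitary flow $e^{isL_\la}$---is exactly the specialization the paper has in mind.
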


 \vspace{0.3cm}
 \textbf{Plan of the paper.} In Section 2, we provide essential preliminaries concerning the scaled Hermite operators, fractional Fourier transform, and the special Hermite operators. Additionally, within this section, we present the proof of Theorem \ref{herobserv} along with the necessary lemmas required for the proof of Theorem \ref{spherobserv}. The complete proof of Theorem \ref{spherobserv} is presented in the final section.

\vspace{0.3cm}
\textbf{Notations.} The letters $\N$, $\Z$, $\R$, and $\C$ will respectively denote the set of all natural numbers, the ring of integers, and the fields of real and complex numbers. For $ z \in \C $, we use the notation $\Im z$ for the imaginary part of $z$. We shall follow the standard practice of using the letters $c$, $C$, etc., for positive constants, whose value may change from one line to another. We shall also use $C(\varepsilon)$ or $C_{\varepsilon}$ to show their dependencies on the parameter $\varepsilon$. For a set $S$, we denote by $1_S$ its characteristic function and by $S^c$ its complement. 

\section{Preliminaries}
\subsection{Hermite operator and Fractional Fourier transform}
We begin with some preliminaries concerning the scaled Hermite operators and the fractional Fourier transform. We recall that the spectral decomposition of the scaled Hermite operator $H(\la)$ on $\R^d$ is given by
\begin{equation}\label{spectral}
 H(\la)=\sum_{k=0}^{\infty}(2k+d)|\la|P_k(\la),
\end{equation}
where $P_k(\la)$ is the projection on the eigenspace corresponding to the eigenvalue $(2k+d)|\la|$. For $g\in L^2(\R^d)$, $P_k(\la)g$ has the following expression
$$P_k(\la)g=\sum_{|\nu|=k}\langle g,\Phi_{\nu}^{\la}\rangle\,\Phi_{\nu}^{\la},$$
where $\nu=(\nu_1,\cdots,\nu_d)\in(\N\cup\{0\})^d$, $|\nu|=\sum_{i=1}^d\nu_i$, $\langle\cdot,\cdot\rangle$ is the standard inner product on $L^2(\R^d)$, and $\Phi_{\nu}^{\la}$ is the scaled Hermite functions defined as 
\begin{equation}\label{scaled}
    \Phi_{\nu}^{\la}(x)=|\la|^{\frac{d}{4}}\Phi_{\nu}(|\la|^{\frac{1}{2}}x),\quad x\in\R^d.
\end{equation}
For $\nu\in(\N\cup\{0\})^d$, the Hermite function $\Phi_{\nu}$ is obtained by taking
tensor products of one-dimensional Hermite functions. It is well known that for each $\la\in\R^*$, $\{\Phi_{\nu}^{\la}:\nu\in (\N\cup\{0\})^d\}$ is an orthonormal basis of $L^2(\R^d)$. We refer the reader to \cite{Tha} for more details on Hermite and special Hermite operators.

\vspace{0.3cm}
For $g\in L^1(\R^d)$ and $\alpha\in\R\setminus\pi\Z$, the fractional Fourier transform of order $\alpha$ is defined as \cite[p.422]{Jamfr1} (see also \cite{Jamfr})
\begin{equation}\label{frac}
\mathcal{F}_{\alpha}[g](\xi)=c_{\alpha}^d\gamma_{\alpha}(\xi)\mathcal{F}[\gamma_{\alpha}g](\xi/\sin\alpha),\quad\xi\in\R^d,
\end{equation}
where $\mathcal{F}[h]$ is the standard Fourier transform of an integrable function $h$ on $\R^d$, and
\begin{enumerate}
    \item [(i)] $c_{\alpha}=\frac{e^{i/2(\alpha-\pi/2)}}{|\sin\alpha|^{1/2}}$, and so $|c_{\alpha}|=|\sin\alpha|^{-1/2}$;
    \item [(ii)] $\gamma_{\alpha}(x)=e^{-i\pi\|x\|^2\cot\alpha}$, and so $|\gamma_{\alpha}(x)|=1$, for all $x\in\R^d$.
\end{enumerate}

\vspace{0.3cm}
It is known that $\mathcal{F}_{\alpha}$ extends from $L^1(\R^d)\cap L^2(\R^d)$ to $L^2(\R^d)$ as an unitary operator on $L^2(\R^d)$. It was noted in \cite{Jamfr, Jamfr1} that the fractional Fourier transform may alternatively be defined as 
\begin{equation}\label{frher}
 \mathcal{F}_{\alpha}[f]=\sum_{k=0}^{\infty}e^{-ik\alpha}\sum_{|\nu|=k}\langle f,\Phi_{\nu}\rangle\,\Phi_{\nu}, 
\end{equation}
where $f\in L^2(\R^d)$. For $k\in\Z$, we define $ \mathcal{F}_{2k\pi}f=f$, and $ \mathcal{F}_{(2k+1)\pi}f(\xi)=f(-\xi)$. From these definitions, we see that $\|\mathcal{F}_{\alpha}[f]\|_{L^2(\R^d)}=\|f\|_{L^2(\R^d)}$, 
$\mathcal{F}_{\alpha}\mathcal{F}_{\alpha'}=\mathcal{F}_{\alpha+\alpha'}$.
The following lemma is an analogue of \eqref{naza} for the fractional Fourier transform, which can be established by emulating the proof presented in \cite[Example 2.5]{Jamfr}, where it was demonstrated for the case of $d=1$. 
\begin{lemma}\label{nazafrac}
Suppose $\alpha,\,\beta$ are two real numbers such that $\delta:=\beta-\alpha\notin\pi\Z$ and $A_{\alpha},A_{\beta}$ are two sets of finite measure in $\R^d$. Then the following holds true for every $f\in L^2(\R^d)$
\begin{equation}\label{fracnaz}
\int_{\R^d}|f(x)|^2\,dx\leq Ce^{C|\sin\delta|^{-d}|A_{\alpha}||A_{\beta}|}\left(\int_{\R^d\setminus A_{\alpha}}|\mathcal{F}_{\alpha}[f](x)|^2\,dx+\int_{\R^d\setminus A_{\beta}}|\mathcal{F}_{\beta}[f](x)|^2\,dx\right),
\end{equation}
where $C$ is the constant appearing in \eqref{naza}.
\end{lemma}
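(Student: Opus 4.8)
The plan is to deduce \eqref{fracnaz} directly from the Nazarov--Jaming inequality \eqref{naza} by a change of variables, using the semigroup property $\mathcal{F}_{\alpha}\mathcal{F}_{\alpha'}=\mathcal{F}_{\alpha+\alpha'}$ together with the explicit formula \eqref{frac}. First I would set $g:=\mathcal{F}_{\alpha}[f]$; since $\mathcal{F}_{\alpha}$ is a unitary operator on $L^2(\R^d)$ we have $\|g\|_{L^2(\R^d)}=\|f\|_{L^2(\R^d)}$, and the semigroup property gives $\mathcal{F}_{\beta}[f]=\mathcal{F}_{\delta}[g]$ with $\delta=\beta-\alpha\notin\pi\Z$. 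Hence it suffices to prove
\[
\int_{\R^d}|g(x)|^2\,dx\leq Ce^{C|\sin\delta|^{-d}|A_{\alpha}||A_{\beta}|}\left(\int_{\R^d\setminus A_{\alpha}}|g(x)|^2\,dx+\int_{\R^d\setminus A_{\beta}}|\mathcal{F}_{\delta}[g](x)|^2\,dx\right)
\]
for every $g\in L^2(\R^d)$, so only the single fractional transform $\mathcal{F}_{\delta}$ has to be handled.

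Next I would pass to the ``chirp-corrected'' function $h:=\gamma_{\delta}\,g$. Since $|\gamma_{\delta}|\equiv1$ by (ii), we have $|h|=|g|$ pointwise, so the first two integrals above are unchanged when $g$ is replaced by $h$. For the last integral, formula \eqref{frac} combined with (i)--(ii) yields the pointwise identity
\[
|\mathcal{F}_{\delta}[g](\xi)|=|\sin\delta|^{-d/2}\,\bigl|\mathcal{F}[h](\xi/\sin\delta)\bigr|,\qquad \xi\in\R^d,
\]
initially for $g\in L^1(\R^d)\cap L^2(\R^d)$ and then, by density and the $L^2$-boundedness of all the operators involved, for every $g\in L^2(\R^d)$. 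Performing the dilation $\eta=\xi/\sin\delta$, whose Jacobian $|\sin\delta|^d$ exactly cancels the factor $|c_{\delta}|^{2d}=|\sin\delta|^{-d}$, and writing $B:=\{x/\sin\delta:x\in A_{\beta}\}$ (so that $\R^d\setminus A_{\beta}$ is carried onto $\R^d\setminus B$ and $|B|=|\sin\delta|^{-d}|A_{\beta}|$), I obtain
\[
\int_{\R^d\setminus A_{\beta}}|\mathcal{F}_{\delta}[g](\xi)|^2\,d\xi=\int_{\R^d\setminus B}|\mathcal{F}[h](\eta)|^2\,d\eta.
\]

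Finally, I would apply \eqref{naza} to the function $h$ with the pair of finite-measure sets $A_{\alpha}$ and $B$; since $|A_{\alpha}|\,|B|=|\sin\delta|^{-d}|A_{\alpha}|\,|A_{\beta}|$, this is precisely the displayed inequality for $g$, and hence \eqref{fracnaz} after undoing the substitutions. The argument is essentially bookkeeping, so I do not expect a genuine obstacle: the only point requiring a little care is the passage from $L^1\cap L^2$ to $L^2$ in the pointwise identity for $|\mathcal{F}_{\delta}[g]|$ (handled by approximation, using that multiplication by $\gamma_{\delta}$ is an isometry, dilation is bounded, and $\mathcal{F}$ is an isometry on $L^2(\R^d)$), together with keeping track of the measure-scaling factor $|\sin\delta|^{-d}$. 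All of the analytic content is carried by \eqref{naza}.
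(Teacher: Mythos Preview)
Your proposal is correct and follows essentially the same route as the paper: the paper sets $\varphi=\gamma_{\delta}\mathcal{F}_{\alpha}[f]$ (your $h$), uses the identity $|\mathcal{F}_{\beta}[f](\xi)|=|\sin\delta|^{-d/2}|\mathcal{F}[\varphi](\xi/\sin\delta)|$, and applies \eqref{naza} with $A=A_{\alpha}$ and $B=(\sin\delta)^{-1}A_{\beta}$, exactly as you do. The only difference is cosmetic---you split the substitution into two steps ($g$ then $h$) and add a remark on the density argument---while the paper combines them.
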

\begin{proof}
    We set $\varphi=\gamma_{\delta}\mathcal{F}_{\alpha}[f]$. Then using the definition \eqref{frac}, we obtain 
\begin{align*}
&|\varphi(\xi)|=|\mathcal{F}_{\alpha}[f](\xi)|\\
    &|\mathcal{F}_{\beta}[f](\xi)|=|\mathcal{F}_{\beta-\alpha}\left[\mathcal{F}_{\alpha}[f]\right](\xi)|=|\sin\delta|^{-d/2}|\mathcal{F}[\varphi](\xi/\sin\delta)|,\quad\xi\in\R^d.
\end{align*}
Consequently, a straightforward computation yields 
\begin{equation}\label{fttt}
\begin{rcases}
    &\|\mathcal{F}_{\alpha}[f]\|_{L^2(\R^d)}=\|\varphi\|_{L^2(\R^d)},\\
    &\|\mathcal{F}_{\alpha}[f]\|_{L^2(\R^d\setminus A_{\alpha})}=\|\varphi\|_{L^2(\R^d\setminus A_{\alpha})},\\
    &\|\mathcal{F}_{\beta}[f]\|_{L^2(\R^d\setminus A_{\beta})}=\|\mathcal{F}[\varphi]\|_{L^2(\R^d\setminus (\sin\delta)^{-1}A_{\beta})},
\end{rcases}
    \end{equation}
where for $\rho\in\R^*$ and for a set $V\subseteq\R^d$, $\rho V:=\{\rho v:v\in V\}$. We apply Nazarov's uncertainty principle \eqref{naza} for the function $\varphi$, with $A=A_{\alpha}$, $B=(\sin\delta)^{-1}A_\beta$, and use \eqref{fttt} to get
\begin{align*}
 &\|f \|^2_{L^2(\R^d)}=\|\mathcal{F}_{\alpha}[f]\|^2_{L^2(\R^d)}=\|\varphi\|^2_{L^2(\R^d)}\\&\leq Ce^{C|A_{\alpha}||(\sin\delta)^{-1}A_{\beta}|}\left(\|\varphi\|^2_{L^2(\R^d\setminus A_{\alpha})}+\|\mathcal{F}[\varphi]\|^2_{L^2(\R^d\setminus (\sin\delta)^{-1}A_{\beta})}\right)\\
 &\leq Ce^{C|\sin\delta|^{-d}|A_{\alpha}||A_{\beta}|}\left(\|\mathcal{F}_{\alpha}[f]\|^2_{L^2(\R^d\setminus A_{\alpha})}+\|\mathcal{F}_{\beta}[f]\|^2_{L^2(\R^d\setminus A_{\beta})}\right).
\end{align*}
This completes the proof.
\end{proof}
We are now ready to prove Theorem \ref{herobserv}.
\begin{proof}[Proof of Theorem \ref{herobserv}]
 Using the spectral resolution of $H(\la)$, we can write the solution $u$ of \eqref{hermitesc} as follows:
\begin{equation}\label{solspec}
u(x,t)=e^{itH(\la)}u_0(x)=\sum_{k=0}^{\infty}e^{it(2k+d)|\la|}\sum_{|\nu|=k}\langle u_0,\Phi_{\nu}^{\la}\rangle\,\Phi_{\nu}^{\la}(x),\quad(x,t)\in\R^d\times(0,\infty).
\end{equation}
Setting $f_{\la}:=u_0\left(\frac{\cdot}{\sqrt{|\la|}}\right)$, and using the definition of scaled Hermite functions \eqref{scaled}, we get
$$\langle u_0,\Phi_{\nu}^{\la}\rangle=|\la|^{-\frac{d}{4}}\langle f_{\la},\Phi_{\nu}\rangle.$$
Using this observation in \eqref{solspec}, we see that
$$u(x,t)=\sum_{k=0}^{\infty}e^{it(2k+d)|\la|}\sum_{|\nu|=k}\langle f_{\la},\Phi_{\nu}\rangle\,\Phi_{\nu}(\sqrt{|\la|}\,x).$$
In view of the definition of the fractional Fourier transform \eqref{frac}, we can write from the above 
$$u(x,t)=e^{itd|\la|}\mathcal{F}_{-2|\la|t}[f_{\la}](\sqrt{|\la|}\,x).$$
Thus, a simple change of variable yields
\begin{equation}\label{substi1}
    |\la|^{\frac{d}{2}}\|u(\cdot,t)\|^2_{L^2(\R^d\setminus B)}=\|\mathcal{F}_{-2|\la|t}[f_{\la}]\|^2_{L^2(\R^d\setminus|\la|^{\frac{1}{2}}\,B)}.
\end{equation}
Similarly, We also note that \begin{equation}\label{substi2}
\|f_{\la}\|^2_{L^2(\R^d\setminus|\la|^{\frac{1}{2}}\,A)}=|\la|^{\frac{d}{2}}\|u_0\|^2_{L^2(\R^d\setminus A)},\quad \|f_{\la}\|^2_{L^2(\R^d)}=|\la|^{\frac{d}{2}}\|u_0\|^2_{L^2(\R^d)}.
\end{equation}
Finally, using the hypothesis that $2t|\lambda|\in\R\setminus\pi\mathbb{Z}$, we can apply Lemma \ref{nazafrac} for $f=f_{\la}$, with $$\alpha=0,\quad  \beta=-2|\la|t,\quad A_{\alpha}=|\la|^{\frac{1}{2}}\, A,\quad  A_{\beta}=|\la|^{\frac{1}{2}}\,B,$$ 
and then using \eqref{substi1}, \eqref{substi2}, we obtain
$$\int_{\R^d}|u_0(x)|^2\,dx\leq Ce^{C|\sin(2|\la|t)|^{-d}|\la|^d|A||B|} \left(\int_{\R^d\setminus A} |u_0(x)|^2\,dx+\int_{\R^d\setminus B}|u(x, t)|^2\,dx\right),$$
where $C$ is the constant appearing in \eqref{naza}.
\end{proof}

\vspace{0.3cm}
\subsection{Special Hermite Operator} For a nonzero real number $\la$, the special Hermite operator $L_{\lambda}$ is explicitly given by 
	$$L_\lambda = -\Delta_{\C^d}+\frac{1}{4} \lambda^2 \|z\|^2- i \lambda\sum_{j=1}^{d}\left(x_j\frac{\partial}{\partial y_j}-y_j\frac{\partial}{\partial x_j}\right),$$
	where $z=(z_1,\cdots,z_d)=(x_1+iy_1,\cdots,x_d+iy_d)\in\C^d$, $\|z\|$ is the usual Euclidean norm of $z\in\C^d$, and $\Delta_{\C^d}$ is the Laplacian on $\C^d$.
	
We recall the Schrödinger equation \eqref{sphermitesc} associated with the operator $L_{\lambda}$, is given by
\begin{equation*}
    \begin{cases}
        i\partial_tu(z,t)+L_{\lambda}u(z,t)&=0,\quad (z,t)\in\C^d\times(0,\infty),\\
u(z,0)&=u_0(z),\quad u_0\in L^2(\C^d).
    \end{cases}
\end{equation*}
It is known that the solution $u(z,t)$ of \eqref{sphermitesc} is provided by the following formula \cite{PRT}:
\begin{equation}\label{ssol}
u(z,t)=e^{itL_{\la}}f(z)=f\times_{\la}p^{\la}_{it}(z),\quad(z,t)\in\C^d\times(0,\infty),
\end{equation}
where the kernel $p^{\la}_{it}$ of the Schr\"odinger semigroup $e^{itL_{\la}}$ is given by
\begin{equation}\label{skernel}
p^{\la}_t(z)=c_d\left(\frac{\la}{\sin\la t}\right)^de^{\frac{i\la}{4}\:\|z\|^2\cot t\la}
\end{equation}
and $g\times_{\la}h$ denotes the $\la$-twisted convolution of two suitable functions $g,\,h$ on $\C^d$, which is defined as
\begin{equation}\label{twistedconv}
g\times_{\la}h(z)= \int_{\C^d} g(z-w) h(w) e^{\frac{i\la}{2} \Im (z \cdot \bar{w})}\,dw.
\end{equation}
We note from (\ref{skernel}) that $e^{itL_{\la}}$ is periodic in $t$ and $p^{\la}_{it}$ exists as long as $\la t$ is not an integral multiple of $\pi$. Also, $\{e^{i\tau L_{\la}}:\tau\in\R\}$ is a one-parameter group of unitary
operators on $L^2(\C^d)$ and we denote $S_{\tau}^{\la}:=e^{i\tau L_{\la}}$, $\tau\in\R$. Moreover, the solution $u(z,t)$ of \eqref{sphermitesc} is $\frac{2\pi}{|\la|}$-periodic in $t$.

\vspace{0.3cm}
An interesting feature of the special Hermite operator $L_{\la}$ is that it is invariant under a certain translation on $\C^d$, namely the $\la$-twisted translation. For $w\in\C^d$, the $\la$-twisted translation of a function $g$ defined on $\C^d$ by $w$ is defined as follows:
\begin{equation}\label{lamtrans}
T^{\la}_wg(z)=e^{\frac{i\la}{2} \Im (w \cdot \bar{z})}g(z-w),\:\:\:z\in\C^d.
\end{equation}
One can deduce the following two observations regarding $\la$-twisted convolution from the definitions (\ref{twistedconv}) and \eqref{lamtrans}:
\begin{equation}\label{twi}
    \begin{rcases}
        &g\times_{\la}h=h\times_{-\la}g,\\
    &(T^{\la}_wg)\times_{\la}h=T^{\la}_w(g\times_{\la}h).
    \end{rcases}
\end{equation}
\subsection{Some Auxilary Lemmas} It is evident from (\ref{skernel}) that  $p^{\la}_{it}=p^{-\la}_{it}$, whenever $t\la\in\R\setminus\pi\Z$. Thus, from now on, we fix some $\la>0$ and $t>0$ such that $\la t\in\R\setminus\pi\Z$.  We fix two measurable sets $E,\:\Omega$ in $\C^d$ of finite measure and define an operator $P^{\la}_t$ on $L^2(\C^d)$ as follows:
\be \label{P-defn}
P^{\la}_tf:=P^{\la}_{E, \Omega, t}f:=1_\Omega S^{\la}_t 1_E f, \quad f\in L^2(\C^d).
\ee
As $S^{\la}_t=e^{itL_{\la}}$ is an unitary operator on $L^2(\C^d)$, $P^{\la}_t$ is a bounded linear operator on $L^2(\C^d)$ with norm at most one. The subsequent lemma presents a criterion for the validity of our main theorem, Theorem \ref{spherobserv}.
\begin{lemma}\label{mainlemma}
If $\|P^{\la}_t\|_{L^2(\C^d)\to L^2(\C^d)}<1$, then there is a constant $C=C(t,\la,E,\Omega)>0$ such that 
\begin{equation}\label{mainineqlemma}
\int_{\C^d}|f(z)|^2\:dz\leq C\left(\int_{E^c}|f(z)|^2\:dz+\int_{\Omega^c}|S^{\la}_tf(z)|^2\:dz\right),
\end{equation}
for any $f\in L^2(\C^d)$.
\end{lemma}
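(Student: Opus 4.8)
The plan is to derive the observability inequality \eqref{mainineqlemma} from the spectral-gap hypothesis $\|P^{\la}_t\|_{L^2\to L^2}<1$ by a standard functional-analytic argument. Write $q:=\|P^{\la}_t\|_{L^2(\C^d)\to L^2(\C^d)}<1$. Fix $f\in L^2(\C^d)$ and decompose $f=1_Ef+1_{E^c}f$. Applying the unitary operator $S^{\la}_t$ and then multiplying by $1_{\Omega}$ gives
\begin{equation*}
1_{\Omega}S^{\la}_tf=P^{\la}_tf+1_{\Omega}S^{\la}_t1_{E^c}f,
\end{equation*}
so that, writing $S^{\la}_tf=1_{\Omega}S^{\la}_tf+1_{\Omega^c}S^{\la}_tf$ and using that $S^{\la}_t$ is an isometry,
\begin{equation*}
\|1_Ef\|_{L^2}\leq\|S^{\la}_tf\|_{L^2}=\|1_{\Omega}S^{\la}_tf\|_{L^2}+\|1_{\Omega^c}S^{\la}_tf\|_{L^2}\leq\|P^{\la}_tf\|_{L^2}+\|1_{E^c}f\|_{L^2}+\|1_{\Omega^c}S^{\la}_tf\|_{L^2}.
\end{equation*}
I would be slightly more careful and square norms instead of using the triangle inequality directly, to get a cleaner constant; but the essential point is the next step.

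The key step is to absorb the term $\|P^{\la}_tf\|_{L^2}$. Since $P^{\la}_tf=1_{\Omega}S^{\la}_t1_Ef$, the operator norm bound gives $\|P^{\la}_tf\|_{L^2}\leq q\,\|1_Ef\|_{L^2}$. Feeding this back into the chain above,
\begin{equation*}
\|1_Ef\|_{L^2}\leq q\,\|1_Ef\|_{L^2}+\|1_{E^c}f\|_{L^2}+\|1_{\Omega^c}S^{\la}_tf\|_{L^2},
\end{equation*}
and since $q<1$ we may rearrange to obtain
\begin{equation*}
\|1_Ef\|_{L^2}\leq\frac{1}{1-q}\left(\|1_{E^c}f\|_{L^2}+\|1_{\Omega^c}S^{\la}_tf\|_{L^2}\right).
\end{equation*}
Finally, writing $\|f\|_{L^2}^2=\|1_Ef\|_{L^2}^2+\|1_{E^c}f\|_{L^2}^2$ and combining with the last display (using $(a+b)^2\leq 2a^2+2b^2$) yields \eqref{mainineqlemma} with an admissible constant such as $C=1+\frac{2}{(1-q)^2}$, which depends only on $t,\la,E,\Omega$ through $q$. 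If one prefers to work with squared quantities throughout, one can instead start from $\|1_{\Omega}S^{\la}_tf\|_{L^2}\leq\|P^{\la}_tf\|_{L^2}+\|1_{E^c}f\|_{L^2}$ and proceed analogously; the bookkeeping changes but the conclusion is the same.

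There is no real obstacle here: the content of the lemma is entirely the hypothesis $q<1$, and the proof is the routine "absorb a contraction" maneuver. The only point requiring minimal care is the choice of how to split and recombine the $L^2$ norms so that the final constant is genuinely finite and depends on the stated parameters only; I expect this to take at most a few lines. The genuinely hard work of the paper — verifying that $\|P^{\la}_t\|<1$ for sets of finite measure, via the Amrein–Berthier-type argument alluded to in the introduction — lies outside this lemma.
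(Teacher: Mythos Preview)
Your proof is correct and follows essentially the same ``absorb a contraction'' argument as the paper (the paper first treats the special case $1_Ef=f$ and then bootstraps to general $f$, while you do it in one pass, but the content is identical and the constants are comparable). One trivial slip: the displayed equality $\|S^{\la}_tf\|_{L^2}=\|1_{\Omega}S^{\la}_tf\|_{L^2}+\|1_{\Omega^c}S^{\la}_tf\|_{L^2}$ should be $\leq$ (triangle inequality, not Pythagoras), but this is harmless for the chain of bounds.
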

\begin{proof}
We first assume that $1_Ef=f$, with $f\in L^2(\C^d)$. Then $P_t^{\la}f=1_{\Omega}S_t^{\la}f$ and hence
$$\|1_\Omega S^{\la}_t f\|_{L^2(\C^d)}=\|P_t^{\la} f\|_{L^2(\C^d)}\leq \|P_t^{\la}\|_{L^2(\C^d) \ra L^2(\C^d)} \|f\|_{L^2(\C^d)},$$
which implies
\begin{eqnarray}
\|1_{\Omega^c} S^{\la}_t f\|_{L^2(\C^d)}&\geq&\|S^{\la}_t f\|_{L^2(\C^d)}-\|1_\Omega S^{\la}_t f\|_{L^2(\C^d)}\nonumber\\&\geq& \|S^{\la}_t f\|_{L^2(\C^d)}-\|P_t^{\la}\|_{L^2(\C^d) \ra L^2(\C^d)} \|f\|_{L^2(\C^d)}\nonumber\\
&=&\|f\|_{L^2(\C^d)}-\|P_t^{\la}\|_{L^2(\C^d) \ra L^2(\C^d)} \|f\|_{L^2(\C^d)}\nonumber\\&=&(1-\|P_t^{\la}\|_{L^2(\C^d) \ra L^2(\C^d)})\|f\|_{L^2(\C^d)}\label{specialcase}.
\end{eqnarray}
We set $C_0=(1-\|P_t^{\la}\|_{L^2(\C^d) \ra L^2(\C^d)})$. Since the hypothesis states that $\|P^{\lambda}_t\|_{L^2(\mathbb{C}^d)\to L^2(\mathbb{C}^d)}<1$, \eqref{specialcase} shows that (\ref{mainineqlemma}) holds with the constant $C=C_0^{-2}$ whenever $1_Ef=f$, with $f\in L^2(\mathbb{C}^d)$.

Now, for an arbitrary $f\in L^2(\C^d)$, we note that
\begin{eqnarray*}
\|f\|_{L^2(\C^d)}&\leq&\|1_Ef\|_{L^2(\C^d)}+\|1_{E^c}f\|_{L^2(\C^d)}\\&\leq&C_0^{-1}\|1_{\Omega^c} S^{\la}_t 1_Ef\|_{L^2(\C^d)}+\|1_{E^c}f\|_{L^2(\C^d)}\hspace{0.6cm}(\text{using (\ref{specialcase})})\\&=&C_0^{-1}\|1_{\Omega^c} S^{\la}_t (f-1_{E^c}f)\|_{L^2(\C^d)}+\|1_{E^c}f\|_{L^2(\C^d)}\\&\leq&C_0^{-1}\|1_{\Omega^c} S^{\la}_t f\|_{L^2(\C^d)}+C_0^{-1}\|1_{\Omega^c} S^{\la}_t 1_{E^c}f\|_{L^2(\C^d)}+\|1_{E^c}f\|_{L^2(\C^d)}\\&\leq&C_0^{-1}\|1_{\Omega^c} S^{\la}_t f\|_{L^2(\C^d)}+(C_0^{-1}+1)\|1_{E^c}f\|_{L^2(\C^d)}\quad(\text{as $S^{\la}_t$ is an isometry})
\end{eqnarray*}
Thus, by taking $C(t,\la,E,\Omega)=2(C_0^{-1}+1)^2$, we obtain (\ref{mainineqlemma}).
\end{proof}
\begin{lemma}\label{ptcom}
The operator $P_t^{\la}$ is compact.
\end{lemma}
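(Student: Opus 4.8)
The plan is to exhibit $P^{\la}_t$ as an integral operator on $L^2(\C^d)$ whose Schwartz kernel is square-integrable on $\C^d\times\C^d$; this makes $P^{\la}_t$ a Hilbert--Schmidt operator, and hence compact.

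First I would unwind the definition \eqref{P-defn} using the representation \eqref{ssol} of the propagator together with the formula \eqref{twistedconv} for the twisted convolution. Writing the solution as $S^{\la}_tf=f\times_{\la}p^{\la}_t$, where $p^{\la}_t$ is the kernel recorded in \eqref{skernel}, and performing the change of variables $w\mapsto z-w$ (using that $\Im\bigl(z\cdot\overline{(z-w)}\bigr)=-\Im(z\cdot\bar w)$, since $\|z\|^2\in\R$), one obtains
$$P^{\la}_tf(z)=1_{\Omega}(z)\,S^{\la}_t(1_Ef)(z)=\int_{\C^d}K(z,w)\,f(w)\,dw,\qquad K(z,w):=1_{\Omega}(z)\,1_E(w)\,p^{\la}_t(z-w)\,e^{-\frac{i\la}{2}\Im(z\cdot\bar w)}.$$

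The key observation is then that the modulus of the kernel $p^{\la}_t$ is a constant. Indeed, since $\la t\in\R\setminus\pi\Z$, the number $\cot t\la$ is real and finite, so $\bigl|e^{\frac{i\la}{4}\|z-w\|^2\cot t\la}\bigr|=1$, and \eqref{skernel} gives $|p^{\la}_t(z-w)|=|c_d|\,\bigl(|\la|/|\sin\la t|\bigr)^d=:M$ for all $z,w\in\C^d$. Hence $|K(z,w)|=M\,1_{\Omega}(z)\,1_E(w)$, and since $E$ and $\Omega$ have finite Lebesgue measure,
$$\|K\|_{L^2(\C^d\times\C^d)}^2=\int_{\C^d}\int_{\C^d}|K(z,w)|^2\,dz\,dw=M^2\,|\Omega|\,|E|<\infty.$$
Thus $K\in L^2(\C^d\times\C^d)$, so $P^{\la}_t$ is Hilbert--Schmidt on $L^2(\C^d)$ and therefore compact.

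I do not anticipate any genuine obstacle here. The only points requiring care are the routine bookkeeping of the oscillatory phase factor in the twisted convolution under the change of variables, and the elementary but crucial remark that $|p^{\la}_t|$ is bounded precisely because $\cot t\la$ is real when $\la t\notin\pi\Z$; the passage from a square-integrable kernel to compactness of the operator is the standard Hilbert--Schmidt criterion.
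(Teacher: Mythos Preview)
Your argument is correct and is essentially the same as the paper's: you write $P^{\la}_t$ as an integral operator with kernel $K(z,w)=1_{\Omega}(z)\,1_E(w)\,p^{\la}_{it}(z-w)\,e^{-\frac{i\la}{2}\Im(z\cdot\bar w)}$, observe that $|p^{\la}_{it}|$ is a finite constant because $\la t\notin\pi\Z$, and conclude that $\|K\|_{L^2(\C^d\times\C^d)}^2=|c_d|^2\left|\la/\sin\la t\right|^{2d}|E|\,|\Omega|<\infty$, so $P^{\la}_t$ is Hilbert--Schmidt and hence compact. The only cosmetic difference is that the paper reaches the same kernel via the identity $g\times_{\la}h=h\times_{-\la}g$ from \eqref{twi} rather than by the change of variables $w\mapsto z-w$; the two computations are equivalent.
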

\begin{proof}
Using the definitions (\ref{P-defn}) of $P_t^{\la}$ and that of $S^{\la}_t=e^{itL_{\la}}$ (see \eqref{ssol}), we write
\begin{equation*}
P_t^{\la}f(z)=1_{\Omega}(z)S_t^{\la}(1_Ef)(z)=1_{\Omega}(z)(1_Ef)\times_{\la}\,p_{it}^{\la}(z)=1_{\Omega}(z)\,p_{it}^{\la}\times_{-\la}(1_Ef)(z), 
\end{equation*}
where we have used \eqref{twi} in the last equality. Using the definition of $\la$-twisted convolution \eqref{twistedconv}, we obtain
$$P_t^{\la}f(z)=1_{\Omega}(z)\int_{\C^d}1_{E}(w)f(w)\,p_{it}^{\la}(z-w)e^{-\frac{i\la}{2} \Im (z \cdot \bar{w})}\:dw=\int_{\C^d}K_t(z,w)f(w)\:dw,$$
where$$K_t(z,w)=1_{\Omega}(z)1_{E}(w)\,p_{it}^{\la}(z-w)e^{-\frac{i\la}{2} \Im (z \cdot \bar{w})}.$$
By plugging in the expression of $p_{it}^{\lambda}$ \eqref{skernel}, we arrive at:
\begin{eqnarray*}
\int_{\C^d}\int_{\C^d}|K_t(z,w)|^2\:dw\:dz&=&\int_{\C^d}1_{\Omega}(z)\int_{\C^d}1_{E}(w)|p_{it}^{\la}(z-w)|^2\:dw\:dz\nonumber\\&=&\int_{\C^d}1_{\Omega}(z)\int_{\C^d}\left|\frac{\la}{\sin\la t}\right|^{2d}1_E(w)\:dw\:dz\nonumber\\&=&|\Omega||E|\left|\frac{\la}{\sin\la t}\right|^{2d}<\infty,\quad\text{as}\,\,\la t\in\R\setminus\pi\Z.
\end{eqnarray*}
This shows that $P^{\la}_t$ is a Hilbert-Schimdt operator on $L^2(\C^d)$ and hence $P^{\la}_t$ is compact.
\end{proof}
\begin{lemma}\label{support}
Suppose for some $f\in L^2(\C^d)$, we have the following equality $$\|P_t^{\la}f\|_{L^2(\C^d)}=\|f\|_{L^2(\C^d)}.$$ 
Then $f=0$ a.e. on $E^c$ and $S_t^{\la}f=0$ a.e. on $\Omega^c$.
\end{lemma}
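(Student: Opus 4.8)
The plan is to exploit the fact, noted just before Lemma \ref{mainlemma}, that $S^{\la}_t=e^{itL_{\la}}$ is unitary on $L^2(\C^d)$ and hence $P^{\la}_t=1_\Omega S^{\la}_t 1_E$ has operator norm at most one, and then to track precisely where equality can occur in the natural chain of norm estimates. The elementary observation driving everything is that for a measurable set $S\subseteq\C^d$ and $g\in L^2(\C^d)$ one has $\|1_S g\|_{L^2(\C^d)}\leq\|g\|_{L^2(\C^d)}$, with equality if and only if $g=0$ a.e.\ on $S^c$; this is immediate from the identity $\|g\|^2_{L^2(\C^d)}-\|1_S g\|^2_{L^2(\C^d)}=\|1_{S^c}g\|^2_{L^2(\C^d)}$.

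Starting from the hypothesis and using that $S^{\la}_t$ is an isometry, I would write
\[
\|f\|_{L^2(\C^d)}=\|P^{\la}_t f\|_{L^2(\C^d)}=\|1_\Omega S^{\la}_t 1_E f\|_{L^2(\C^d)}\leq\|S^{\la}_t 1_E f\|_{L^2(\C^d)}=\|1_E f\|_{L^2(\C^d)}\leq\|f\|_{L^2(\C^d)}.
\]
Since the first and last quantities agree, both inequalities are in fact equalities. Applying the observation above with $S=E$ and $g=f$ to the equality $\|1_E f\|_{L^2(\C^d)}=\|f\|_{L^2(\C^d)}$ yields $f=0$ a.e.\ on $E^c$; in particular $1_E f=f$. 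Applying it again with $S=\Omega$ and $g=S^{\la}_t 1_E f$ to the equality $\|1_\Omega S^{\la}_t 1_E f\|_{L^2(\C^d)}=\|S^{\la}_t 1_E f\|_{L^2(\C^d)}$ gives $S^{\la}_t 1_E f=0$ a.e.\ on $\Omega^c$, and feeding in $1_E f=f$ turns this into $S^{\la}_t f=0$ a.e.\ on $\Omega^c$, as desired.

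There is no real obstacle here: the argument is a one-line norm computation once one is careful about the equality case in $\|1_S\,\cdot\,\|\leq\|\cdot\|$. The only point requiring a little attention is the order of operations --- one must first extract $1_E f=f$ from the equality $\|1_E f\|=\|f\|$ and then substitute it back into the second equality to conclude about $S^{\la}_t f$ rather than merely about $S^{\la}_t 1_E f$. Everything else rests solely on the unitarity of $S^{\la}_t$ on $L^2(\C^d)$.
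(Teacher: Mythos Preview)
Your proof is correct and is essentially the same as the paper's: both run the chain $\|f\|=\|1_\Omega S^{\la}_t 1_E f\|\leq\|S^{\la}_t 1_E f\|=\|1_E f\|\leq\|f\|$, deduce $1_E f=f$ from equality (the paper phrases this step as a contradiction, you phrase it as the equality case of $\|1_S g\|\leq\|g\|$), and then feed $1_E f=f$ back in to conclude $S^{\la}_t f=0$ a.e.\ on $\Omega^c$.
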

\begin{proof}
If $f$ would not vanishes almost everywhere on $E^c$, then $\|f\|_{L^2(E^c)}$ is nonzero. However, our hypothesis on $f$ yields the following
$$\|f\|_{L^2(\C^d)}=\|1_\Omega S^{\la}_t 1_E f\|_{L^2(\C^d)}\leq \|S^{\la}_t 1_E f\|_{L^2(\C^d)}\leq\|1_E f\|_{L^2(\C^d)}.$$
This leads to a contradiction. Hence, we must have $f=1_Ef$ a.e., which consequently implies 
$$\|1_{\Omega} S_t^{\la}f\|_{L^2(\C^d)}=\|1_{\Omega} S_t^{\la}(1_Ef)\|_{L^2(\C^d)}=\|P_t^{\la}f\|_{L^2(\C^d)}=\|f\|_{L^2(\C^d)}=\|S_t^{\la}f\|_{L^2(\C^d)}.$$ 
Therefore, $S_t^{\la}f=0$ a.e. on $\Omega^c$. 
\end{proof}
For $z\in\C^d$ and $A\subseteq\C^d$, we define $$z+A=\{z+w:w\in A\},\quad A^{-1}=\{-w:w\in A\}.$$
We note that for any pair of sets $U$, $V$ in $\C^d$ with finite Lebesgue measure, and for $z\in\C^d$
$$|U\cap(z+V)|=\int_{\C^d}1_U(z')1_{z+V}(z')\,dz'=\int_{\C^d}1_U(z')1_{V^{-1}}(z-z')\,dz'=1_U\ast1_{V^{-1}}(z),$$
where $\ast$ denotes the usual group convolution on $\C^d$. We now establish the following lemma, which will play a pivotal role in the proof of our main theorem.
\begin{lemma}\label{crucial}
 Let $A_0\subseteq A$, $B_0\subseteq B$ be four measurable sets in $\C^d$ with finite measure. Then for every $\epsilon>0$, sufficiently small, there exists $w_{\epsilon}\in\C^d$ such that 
 \begin{equation}\label{bothless}
     |A\cup (w_{\epsilon}+A_0)|\leq |A|+\epsilon,\quad |B\cup (w_{\epsilon}+B_0)|\leq |B|+\epsilon,
 \end{equation}
and one of the following holds
\begin{align}
    &|A|<|A\cup (w_{\epsilon}+A_0)|\label{eithera},\\
    &|B|<|B\cup (w_{\epsilon}+B_0)|\label{orb}.
\end{align}
\end{lemma}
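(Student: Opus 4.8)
The plan is to use the translation-invariance built into Lebesgue measure together with a continuity argument on the function $w \mapsto |A \cup (w+A_0)|$.

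First I would introduce the auxiliary functions
\[
\phi(w) := |A \cup (w+A_0)|, \qquad \psi(w) := |B \cup (w+B_0)|, \qquad w \in \C^d.
\]
Using inclusion-exclusion, $\phi(w) = |A| + |A_0| - |A \cap (w+A_0)| = |A| + |A_0| - (1_A \ast 1_{A_0^{-1}})(w)$, and similarly for $\psi$. Since the convolution of two $L^1$ (indeed $L^2$, as the sets have finite measure) functions on $\C^d$ is continuous and tends to $0$ at infinity, both $\phi$ and $\psi$ are continuous on $\C^d$ with $\phi(w) \to |A| + |A_0|$ and $\psi(w) \to |B| + |B_0|$ as $|w| \to \infty$. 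Also $\phi(0) = |A \cup A_0| = |A|$ (since $A_0 \subseteq A$) and $\psi(0) = |B|$.

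Next, fix $\epsilon > 0$ small; I may assume $\epsilon < |A_0|$ and $\epsilon < |B_0|$ (otherwise shrink $\epsilon$), so that the limiting values $|A|+|A_0|$ and $|B|+|B_0|$ strictly exceed $|A|+\epsilon$ and $|B|+\epsilon$ respectively. Consider the continuous path $w = s\, w_\infty$, $s \in [0,\infty)$, for a fixed unit vector direction — actually it is cleaner to argue along a ray on which the limit behaviour kicks in. Define $g(s) := \max\{\phi(sw_1) - |A|,\ \psi(sw_1)-|B|\}$ for a suitable fixed $w_1 \ne 0$; then $g(0) = 0$ and $g(s) \to \max\{|A_0|, |B_0|\} > \epsilon$ as $s \to \infty$. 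Since $g$ is continuous, by the intermediate value theorem there is a smallest $s_0 > 0$ with $g(s_0) = \min\{\epsilon,\ \tfrac12\max\{|A_0|,|B_0|\}\}$, or more simply, pick $s_0$ to be the first time $g$ hits the value $\epsilon' := \min\{\epsilon, \tfrac12|A_0|, \tfrac12|B_0|\}$ — wait, I want $g(s_0)\in(0,\epsilon]$. Take $s_0 := \inf\{s > 0 : g(s) = \epsilon'\}$ where $\epsilon' = \min\{\epsilon,\ \tfrac12|A_0|,\ \tfrac12|B_0|\}$; by continuity $g(s_0) = \epsilon' \in (0,\epsilon]$. Set $w_\epsilon := s_0 w_1$. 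Then $g(w_\epsilon) = \epsilon' > 0$ forces at least one of $\phi(w_\epsilon) > |A|$ or $\psi(w_\epsilon) > |B|$, which is precisely \eqref{eithera} or \eqref{orb}. Meanwhile $g(w_\epsilon) = \epsilon' \le \epsilon$ gives $\phi(w_\epsilon) - |A| \le \epsilon$ and $\psi(w_\epsilon) - |B| \le \epsilon$, i.e. \eqref{bothless}.

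The one genuine subtlety — and the step I would be most careful about — is ensuring that \emph{both} $\phi(sw_1)$ and $\psi(sw_1)$ actually climb strictly above their base values $|A|, |B|$ somewhere along a common ray, without one of them staying pinned at zero excess: but this is automatic since $g$ is defined as the $\max$, and $g(s) \to \max\{|A_0|,|B_0|\} \ge \tfrac12(|A_0|+|B_0|) > 0$, so $g$ must exceed $\epsilon'$ for large $s$; the first crossing handles everything. I should also note the harmless degenerate case: if $|A_0| = 0$ or $|B_0| = 0$ the corresponding set contributes nothing and the statement is trivial for that set, so I may assume $|A_0|, |B_0| > 0$, which is consistent with the paper's standing convention that all measurable sets have positive measure. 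This completes the argument.
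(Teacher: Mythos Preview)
Your argument is correct, and in fact it is somewhat cleaner than the paper's. Both proofs begin the same way, writing $\phi(w)=|A\cup(w+A_0)|$ and $\psi(w)=|B\cup(w+B_0)|$ via convolution and invoking continuity together with vanishing at infinity. The divergence is in how the point $w_\epsilon$ is located. The paper works \emph{asymmetrically}: it first analyzes the level set $S_\epsilon=\{w:\phi(w)=|A|+\epsilon\}$, proves it is nonempty and compact, takes a point $w'$ of minimal norm on it (so that $\phi\le|A|+\epsilon$ on the entire ball of radius $\|w'\|$), and then splits into cases according to whether $\psi(w')\le|B|+\epsilon$ or not, in the latter case pulling back along the segment from $0$ to $w'$ via the intermediate value theorem. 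Your approach treats the two sets \emph{symmetrically} by working with the single scalar function $g(s)=\max\{\phi(sw_1)-|A|,\,\psi(sw_1)-|B|\}$ along one ray, and a single application of the intermediate value theorem does the job: at the first $s_0$ where $g(s_0)=\epsilon'$, both upper bounds in \eqref{bothless} are automatic (each term of the max is $\le\epsilon'\le\epsilon$) and at least one strict inequality in \eqref{eithera}--\eqref{orb} is forced (some term of the max equals $\epsilon'>0$). This bypasses the level-set compactness step and the case split entirely. One cosmetic point: you write $g(w_\epsilon)$ in the last two sentences when you mean $g(s_0)$; and the preliminary reduction ``assume $\epsilon<|A_0|$ and $\epsilon<|B_0|$'' is superfluous once you introduce $\epsilon'=\min\{\epsilon,\tfrac12|A_0|,\tfrac12|B_0|\}$.
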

\begin{proof}
  We first define the function $h_A:\C^d\to[|A|,\infty)$ as follows:
  $$h_A(z)=|A\cup(z+A_0)|,\,\,\,z\in\C^d.$$
 It is clear that $h_A$ has the following expression 
\begin{align}\label{union}
    h_A(z)=|A|+|A_0|-|A\cap(z+A_0)|=|A|+|A_0|-1_{A}\ast 1_{A_0^{-1}}(z),\,\,\,\text{for all}\,\,z\in\C^d.
\end{align}
Since $\C^d$ is unimodular and $|A|$, $|A_0|$ are finite, $1_{A}\ast 1_{A_0^{-1}}$ is continuous and  vanishes at infinity. Consequently, $h_A$ is continuous in $\C^d$ and there exists $w_0\in\C^d$ such that $$1_{A}\ast 1_{A_0^{-1}}(w_0)<\frac{|A_0|}{2}.$$ 
Applying the inequality above in \eqref{union}, we obtain $h_A(w_0)>|A|$. Then there exists $0<\alpha<|A_0|$ such that
$$h_A(w_0)=|A|+\alpha.$$
We fix $\epsilon\in(0,\alpha)$, and consider the level set
$$S_{\epsilon}=\{w\in\C^d:h_A(w)=|A|+\epsilon\}.$$
 We now show that $S_{\epsilon}$ is a non-empty compact set. To do this, we consider the continuous function $g:[0,1]\to\R$ defined by
 $$g(r)=h_A(rw_0),$$
where we note that $g(0)=|A|$ and $g(1)=|A|+\alpha$. Hence, by the intermediate value property, $S_{\epsilon}$ is non-empty. Since $S_{\epsilon}$ is a level set of a continuous function, it is closed. If there exists a sequence $\{w_j\}_{j=1}^{\infty}\subseteq S_{\epsilon}$ such that $\|w_j\|\to\infty$ as $j\to\infty$, then we can derive the following from \eqref{union}:
$$|A|+\epsilon=\lim_{j\to\infty}h_A(w_j)=|A|+|A_0|-\lim_{j\to\infty}1_{A}\ast 1_{A_0^{-1}}(w_j)=|A|+|A_0|,$$
which is a contradiction as $\epsilon<|A_0|$. Thus, $S_{\epsilon}$ is compact, and hence
$$\delta:=\min_{w\in S_{\epsilon}}\|w\|>0.$$
 Using the continuity of $h_A$ once again, we get
\begin{equation}\label{usedlater}
    |A|\leq h_A(z)\leq |A|+\epsilon,\quad\text{for all}\quad\|z\|\leq\delta.
\end{equation}
 If not, then we would have some $z_0\in\C^d$ with $\|z_0\|\leq\delta$ such that $h_A(z_0)> |A|+\epsilon$. Then we consider the function $$F(r)=h_A(rz_0),\quad r\in[0,1],$$
 which satisfies $F(0)=|A|$, $F(1)>|A|+\epsilon$. By the intermediate value property, we get $r_0\in(0,1)$ such that $$F(r_0)=h_A(r_0z_0)=|A|+\epsilon,$$
 and so $r_0z_0\in S_{\epsilon}$. But this contradicts the definition of $\delta$ as $\|r_0z_0\|<\|z_0\|\leq\delta$.

Similar to the function $h_A$, we define another continuous function $h_B$ in $\C^d$ via the following formula
 $$h_B(z)=|B\cup (z+B_0)|,\quad z\in\C^d.$$
 We now fix $w'\in S_{\epsilon}$ such that $\|w'\|=\delta$ and divide the discussions into two cases.
\vspace{0.3cm}
 
 \textbf{Case 1:} $h_B(w')\leq |B|+\epsilon$. This together with the fact $h_A(w')=|A|+\epsilon$ implies that \eqref{bothless} and \eqref{eithera} hold with $w_{\epsilon}=w'$.
\vspace{0.3cm}

 \textbf{Case 2:} $h_B(w')> |B|+\epsilon$. Then applying the intermediate value property to the continuous function $r\mapsto h_B(rw')$, in the interval $[0,1]$, we can find $r_0\in(0,1)$ such that $h_B(r_0w')= |B|+\epsilon$. Moreover, since $\|r_0w'\|<\|w'\|=\delta$, we deduce from \eqref{usedlater} that
 $$|A|\leq h_A(r_0w')\leq |A|+\epsilon.$$
 Thus, in this case, \eqref{bothless} and \eqref{orb} holds with $w_{\epsilon}=r_0w'$.
\end{proof}
\section{Proof of the main theorem}
At this point, we are ready to present the proof of our main theorem.
\begin{proof}[\textit{Proof of Theorem \ref{spherobserv}}] 
We recall the definition \eqref{P-defn} of $P_t^{\la}$. 
In view of Lemma \ref{mainlemma}, it suffices to prove that 
\begin{equation}\label{ptless1}
    \|P_t^{\la}\|_{L^2(\C^d)\to L^2(\C^d)}<1.
\end{equation}
As we have previously observed that
\begin{equation*}
    \|P_t^{\la}\|_{L^2(\C^d)\to L^2(\C^d)}\leq 1,
\end{equation*}
it is enough to show that 
\begin{equation*}
    \|P_t^{\la}\|_{L^2(\C^d)\to L^2(\C^d)}\neq 1.
\end{equation*}
If possible, let us assume that 
\begin{equation}\label{contra}
\|P_t^{\la}\|_{L^2(\C^d)\to L^2(\C^d)}=1.
\end{equation}
Since $P_t^{\la}$ is compact (see Lemma \ref{ptcom}), \eqref{contra} implies that
there exists $f_0\in L^2(\C^d)$ such that
$$\|P_t^{\la}f_0\|_{L^2(\C^d)}=\|f_0\|_{L^2(\C^d)}=1.$$
Lemma \ref{support} then shows that $f_0=0$, a.e on $E^c$ and $S_{t}^{\la}f_0=0$, a.e on $\Omega^c$. We thus obtain two measurable sets $E_0\subseteq E$, $\Omega_0\subseteq\Omega$ such that $|E_0|=|E|$, $|\Omega_0|=|\Omega|$ and
\begin{equation}\label{supprt}
    1_{E_0}f_0=f_0\quad \text{a.e.},\quad  1_{\Omega_0}S_{t}^{\la}f_0=S_{t}^{\la}f_0\quad\text{a.e}.
\end{equation}
We fix a large positive integer $N$. We will now inductively construct a sequence $\{w_j\}_{j=1}^{\infty}$ such that 
\begin{equation}\label{induction}
|E_j|\leq |E_{j-1}|+\frac{1}{2^{j+N}}, \:\:\:\:|\Omega_j| \leq |\Omega_{j-1}|+\frac{1}{2^{j+N}};
\end{equation}
and one of the following strict inequalities holds
\begin{equation}
\label{eqn-a}
|E_{j-1}| < |E_j|,\:\: \:\:|\Omega_{j-1}|< |\Omega_j|,
\end{equation}
where for $j=1,2,\cdots$, $E_j$ and $\Omega_j$ are measurable subsets of $\C^d$ defined as follows
\begin{equation}\label{ejdefn}
E_j=E_{j-1}\cup (w_j+E_0),\hspace{0.4cm}\Omega_j=\Omega_{j-1}\cup(w_j+\Omega_0).
\end{equation}
Applying Lemma \ref{crucial} for 
$$\epsilon=\frac{1}{2^{1+N}},\quad A=A_0=E_0,\quad B=B_0=\Omega_0,$$ we get $w_1\in\C^d$ such that if we define $E_1,\:\Omega_1$ according to (\ref{ejdefn}) then these measurable sets satisfy (\ref{induction}) and (\ref{eqn-a}) for $j=1$. Suppose we have constructed $\{w_j\}_{j=1}^m$, $\{E_j\}_{j=1}^m$ and $\{\Omega_j\}_{j=1}^m$ satisfying the relations (\ref{induction})-(\ref{ejdefn}) for $j=1,\cdots,m$. For the $(m+1)$th step, applying Lemma \ref{crucial} once again for $$\epsilon= \frac{1}{2^{m+1+N}},\quad A=E_m,\quad A_0=E_0,\quad B=\Omega_m,\quad B_0=\Omega_0,$$ we get $w_{m+1}\in\C^d$ such that the sets $E_{m+1}$, $\Omega_{m+1}$ defined according to (\ref{ejdefn}) satisfy the inequalities (\ref{induction}) and (\ref{eqn-a}). We define $$f_j=T^{\la}_{w_j}f_0,\quad j\in\N.$$
It follows from the definition of twisted translation (\ref{lamtrans}) and from \eqref{supprt} that $1_{w_j+E_0}f_j=f_j$. As $S_t^{\la}$ is given by the $\la$-twisted convolution with the kernel $p_{it}^{\la}$ and $\la$-twisted translation commutes with the $\la$-twisted convolution (see \eqref{twi}), it also follows that $$S_t^{\la}f_j=(T_{w_j}f_0)\times_{\la}p_{it}^{\la}=T_{w_j}(f_0\times_{\la}p_{it}^{\la})=T_{w_j}(S_t^{\la}f_0).$$
Hence, by \eqref{supprt}, $1_{w_j+\Omega_0}S_t^{\la}f_j=S_t^{\la}f_j$. Using our construction \eqref{ejdefn}, we observe that
\begin{equation}\label{subset}
\begin{rcases}
    &E_{j-1}\subset E_j,\hspace{0.7cm}E_j\setminus E_{j-1}\subset w_j+E_0\\
    &\Omega_{j-1}\subset\Omega_j,\hspace{0.7cm}\Omega_j\setminus \Omega_{j-1}\subset w_j+\Omega_0
    \end{rcases}
\quad j\in\N.
\end{equation}
We now claim that $\{f_j\}_{j=1}^{\infty}$ is linearly independent. To establish this, we take an arbitrary finite subcollection $\{f_{j_k}\}_{k=1}^m$, where $j_1<\cdots<j_m$. Since $1_{E_0}f_0=f_0$ a.e., and 
$$f_{j_k}(z)=e^{\frac{i\la}{2} \Im (w_{j_k}\cdot \bar{z})}f_0(z-w_{j_k}),$$
it follows from \eqref{subset} that $$f_{j_k}(z)\neq 0,\:\:\:\:\text{for almost every}\:\:z\in E_{j_k}\setminus E_{j_{k-1}}$$
We note that $f_{j_k}$ vanishes almost everywhere in $\C^d\setminus E_{j_k}$. According to (\ref{eqn-a}), either $|E_{j_m}\setminus E_{j_{m-1}}|>0$, or $|\Omega_{j_m}\setminus \Omega_{j_{m-1}}|>0$. If $|E_{j_m}\setminus E_{j_{m-1}}|>0$, then it follows right away that $f_{j_m}$ is not a linear combination of $f_{j_1},\cdots,f_{j_{m-1}}$ as they all vanishes almost everywhere in $\C^d\setminus E_{j_{m-1}}$. This shows that $\{f_{j_k}\}_{k=1}^m$ is linearly independent. On the other hand, if $|\Omega_{j_m}\setminus \Omega_{j_{m-1}}|>0$, then by similar argument we can prove that $\{S_t^{\la}f_{j_k}\}_{k=1}^m$ is linearly independent. But $S_t^{\la}$ is invertible. Hence, $\{f_{j_k}\}_{k=1}^m$ is linearly independent. This proves our claim.

From (\ref{induction}), we conclude that $$|E_j|\leq |E_0|+\sum_{k=1}^j\frac{1}{2^{k+N}};\quad|\Omega_j|\leq |\Omega_0|+\sum_{k=1}^j\frac{1}{2^{k+N}},$$ and hence
$$|\bigcup_{j=1}^{\infty}E_j|\leq E_0|+1;\quad|\bigcup_{j=1}^{\infty}\Omega_j|\leq|\Omega_0|+1.$$ 
We set \,$\mathcal{E}=\bigcup_{j=1}^{\infty}E_j,\,\,\mathcal{O}=\bigcup_{j=1}^{\infty}\Omega_j.$ Then, by Lemma \ref{ptcom}, the operator $\mathcal{P}:=1_{\mathcal{O}}S^{\la}_t1_{\mathcal{E}}$ is compact. Consequently, $S^{\la}_{-t}\mathcal{P}$ is also compact, as $S^{\la}_{-t}$ is an isometry. Employing the following established facts $$1_{\mathcal{E}}f_j=f_j, \quad 1_{\mathcal{O}}S^{\la}_tf_j=S^{\la}_tf_j,\quad\text{for all}\quad j\geq 0,$$ we get
$$S^{\la}_{-t}\mathcal{P}f_j=S^{\la}_{-t}1_{\mathcal{O}}S^{\la}_t1_{\mathcal{E}}f_j=S^{\la}_{-t}1_{\mathcal{O}}S^{\la}_tf_j=S^{\la}_{-t}S^{\la}_tf_j=f_j.$$
This shows that the compact operator $S^{\la}_{-t}\mathcal{P}$ possesses an infinite number of linearly independent eigenfunctions associated with the eigenvalue $1$. This leads to a contradiction, thereby invalidating our initial assumption (\ref{contra}). This completes the proof of Theorem \ref{spherobserv}.
\end{proof}
\begin{remark}
As we have used \eqref{naza} to prove Theorem \ref{herobserv}, the constant in the inequality \eqref{herobineq} explicitly reveals its dependence on $\lambda$, $t$, $A$, and $B$. However, this is not the case for Theorem \ref{spherobserv} since the proof relies on a contradiction argument.
\end{remark}

\section*{Acknowledgments}
The author is grateful to Mithun Bhowmik, Pritam Ganguly, and Swagato K. Ray for useful discussions. The author is supported by the INSPIRE faculty fellowship (Ref. no. DST/INSPIRE/04/2022/002544) from the Department of Science and Technology, Government of India.

\bibliographystyle{alphaurl}
\bibliography{specialhermite}
\end{document}